\theoremstyle{plain}
\newtheorem{thm}{Theorem}[section]
\newtheorem*{thm*}{Theorem}
\newtheorem{lem}[thm]{Lemma}
\newtheorem*{lem*}{Lemma}
\newtheorem{prop}[thm]{Proposition}
\newtheorem*{prop*}{Proposition}
\newtheorem*{cor*}{Corollary}
\newtheorem*{conjecture*}{Conjecture}
\theoremstyle{definition}
\newtheorem*{defn*}{Definition}
\renewcommand{\ker}{\textup{ker}\,}
\DeclareMathOperator{\im}{im}
\DeclareMathOperator*{\holim}{holim}
\DeclareMathOperator*{\hofib}{hofib}
\newcommand{\DASH}{\textup{--}}
\let\phi\varphi
\renewcommand{\to}{\longrightarrow}
\newcommand{\from}{\longleftarrow}
\newcommand{\squishlist}{
  \setlength{\itemsep}{.5pt}
  \setlength{\parskip}{0pt}
  \setlength{\parsep}{0pt}}
\newcommand{\fraks}{\mathfrak{s}}
\newcommand{\frakd}{\mathfrak{d}}
\newcommand{\frakb}{\mathfrak{b}}
\newcommand{\calP}{\mathcal{P}}
\newcommand{\calA}{\mathcal{A}}
\newcommand{\calD}{\mathcal{D}}
\newcommand{\calX}{\mathcal{X}}
\newcommand{\calC}{\mathcal{C}}
\renewcommand{\@seccntformat}[1]{\csname the#1\endcsname.\quad}
\theoremstyle{plain}
\newtheorem*{completenesstheorem}{Completeness Theorem}
\newtheorem{twistinglemma}[thm]{Twisting Lemma}
\newcommand{\Comm}{\calC}
\title{Connected simplicial algebras are\\Andr\'e-Quillen complete}
\author[M.\ Donovan]{Michael Donovan}
\address{Department of Mathematics \\ Massachusetts Institute of Technology}
\email{mdono@math.mit.edu}
\newcommand{\dupdown}[2]{D_{\smash{#1}}}
\newcommand{\caldup}[1]{\calD_{\smash{#1}}}
\begin{document}

\begin{abstract}
We modify a classical construction of Bousfield and Kan \cite{BousKanSSeq.pdf} to define the Adams tower of a simplicial nonunital commutative algebra over a field $k$. We relate this construction to Radulescu-Banu's cosimplicial resolution \cite{Radulescu-Banu.pdf}, and prove that all connected simplicial algebras are complete with respect to Andr\'e-Quillen homology. This is a convergence result for the unstable Adams spectral sequence for commutative algebras over $k$.
\end{abstract}

\maketitle

Let $s\calC$ denote the simplicial model category \cite{QuillenHomAlg.pdf} of simplicial non-unital commutative algebras over a field $k$, and let $X$ be an object of $s\calC$. Radulescu-Banu \cite{Radulescu-Banu.pdf} constructed a cosimplicial resolution $\calX^\bullet$ of $X$ by generalised Eilenberg-Mac Lane objects, and defined the \emph{completion of $X$ with respect to Andr\'e-Quillen homology} to be the totalization $X\hat{\ }:=\textup{Tot}(\calX^\bullet)$. The purpose of the present work is to prove the following conjecture of Radulescu-Banu:
\begin{completenesstheorem}\label{completenesstheorem}
If $X$ is a connected simplicial $k$-algebra, then $X$ is naturally equivalent to its completion $X\hat{\ }$.
\end{completenesstheorem}
Radulescu-Banu's completion functor $X\mapsto X\hat{\ }$ is the analogue of Bousfield and Kan's $R$-completion functor on simplicial sets \cite{BousKanSSeq.pdf}, a construction that has proven extremely useful in classical homotopy theory. As in the classical case, the homotopy of the completion $X\hat{\ }$ is the target of an unstable Adams spectral sequence. The completeness theorem may be viewed as a convergence result for this spectral sequence, which we will study in detail in forthcoming work.

In fact, Bousfield and Kan have defined the unstable Adams spectral sequence of a simplicial set in two different ways. Their earlier approach \cite{BK_pairings.pdf} was to define the \emph{derivation of a functor with respect to a ring}. This approach constructs the \emph{Adams tower} over the simplicial set in question, and lends itself well to connectivity analyses. Their latter approach, \cite{BousKanSSeq.pdf}, to give a cosimplicial resolution of a simplicial set by simplicial $R$-modules, lends itself more to the analysis of the $E^2$ page, and is directly analogous to Radulescu-Banu's construction.

Since the release of \cite{BK_pairings.pdf} and \cite{BousKanSSeq.pdf}, the relationship between the two approaches has been clarified by the introduction of the theory of cubical diagrams to homotopy theory \cite{GoodwillieCalcII}. Our approach to proving the completeness theorem will be to define the Adams tower of a simplicial algebra using a construction analogous to Bousfield and Kan in \cite{BK_pairings.pdf} (Section \ref{sec:derWRTab}), and to use the theory of cubical diagrams to relate it to Radulescu-Banu's construction (Section \ref{sec:relnWithRB}). In Section \ref{sec:connectivityAnalysis}, we perform the necessary connectivity estimates in the Adams tower in order to prove the completeness theorem. 

As Radulescu-Banu observed, there is an additional difficulty in constructing the Bousfield-Kan cosimplicial resolution of a simplicial algebra which is not present in the classical context. Namely, since not all simplicial algebras are cofibrant, the naive cosimplicial resolution will not be homotopically correct. Radulescu-Banu's innovation was to explain that the cofibrant replacement functor $c:s\calC\to s\calC$ constructed by Quillen's small object argument \cite{QuillenHomAlg.pdf} admits a comonad diagonal $\psi:c\to cc$, and can thus be mixed into the cosimplicial resolution, making it homotopically correct. 

Accordingly, we must mix Quillen's cofibrant replacement functor $c$ into our definition of the Adams tower over a simplicial algebra so that it relates as desired to Radulescu-Banu's resolution. The application of these cofibrant replacement functors adds to the difficulty of proving the connectivity estimates of Section \ref{sec:connectivityAnalysis}. We circumvent this difficulty by shifting to the standard comonadic bar construction.

In Section \ref{introToRBwork} we will introduce Radulescu-Banu's cosimplicial resolution and explain a little terminology. In the appendix we will state and prove a useful result on iterated simplicial bar constructions.

This research will form part of the author's PhD thesis. The author would like to thank his advisor, Haynes Miller, for his support and guidance, and John Harper, for sharing helpful insights into the use of cubical diagrams in connectivity analyses such as that of Section \ref{sec:connectivityAnalysis}.

\section{Radulescu-Banu's completion functor}\label{introToRBwork}
A non-unital commutative $k$-algebra is a $k$-vector space $Y$ equipped with an associative and commutative map $\mu:Y\otimes Y\to Y$. We will refer to such objects simply as \emph{algebras}. It can be shown that if $Y\in\calC$ is a categorical group object, then it is in fact a zero-square algebra, that is  $Y^2=\im(\mu)=0$, and the group map $Y\times Y\to Y$ is simply the vector space addition. Thus, the abelianization adjunction for $\calC$ can be modeled as $Q:s\calC\rightleftarrows s\mathsf{Vect}:K$, in which the left adjoint is the \emph{abelianization} or \emph{indecomposables} functor
\[Q:\calC\to \mathsf{Vect},\qquad Y\mapsto Y/Y^2,\]
and the right adjoint is the \emph{zero-square} functor
\[K:\mathsf{Vect}\to\calC,\qquad V\mapsto V\textup{ with $V^2$ set to zero}.\]
The Andr\'e-Quillen homology of a simplicial algebra $X$ is defined as the homotopy groups of its left derived abelianization:
\[H_*X:=\pi_*(QcX),\]
where $c:s\calC\to s\calC$ is the cofibrant replacement from Quillen's SOA. For an excellent introduction to these ideas, see \cite[\S4]{MR1089001}. Radulescu-Banu constructed \cite{Radulescu-Banu.pdf} a comonad diagonal $\psi:c\to cc$, in order to define the coface maps in a (coaugmented) cosimplicial object:
\[\makebox[0cm][r]{\,$\calX^\bullet:\qquad $}\vcenter{
\def\labelstyle{\scriptstyle}
\xymatrix@C=1.5cm@1{
cX\,
\ar[r]
&
\,cKQcX\,
\ar[r];[]
&
\,c(KQc)^2X\,
\ar@<-1ex>[l];[]
\ar@<+1ex>[l];[]
\ar@<+1ex>[r];[]
\ar@<-1ex>[r];[]
&
\,c(KQc)^3X\,\makebox[0cm][l]{\,$\cdots. $}
\ar[l];[]
\ar@<-2ex>[l];[]
\ar@<+2ex>[l];[]
}}\]
The construction is explained and generalized by Blumberg and Riehl \cite{BlumRiehlResolutions.pdf}. The definition of the coface and codegeneracy maps is similar to that in the monadic resolution of $X$ using the adjunction $Q\dashv K$, however, the coface maps must create an extra copy of $c$, and the codegeneracies must destroy a copy.  Instead of simply using the unit and counit of the adjunction respectively, one uses the composites
\[c\overset{\psi}{\to}cc\overset{c\eta c}{\to}cKQc\textup{\quad and\quad }QcK\overset{Q\epsilon K}{\to}QK\to \textup{id}.\]

Denote by $X\hat{\ }$ the totalization of $\calX^\bullet$, naming $X\hat{\ }$ the \emph{completion of $X$ with respect to Andr\'e-Quillen homology}. There is a natural zig-zag $X\overset{\sim}{\from} cX\to X\hat{\ }$, and one says that $X$ is \emph{complete with respect to Andr\'e-Quillen homology} when the map $cX\to X\hat{\ }$ is an equivalence. We will prove that this occurs whenever $X$ is connected. In order to understand the name of the completion functor, note that levelwise application of Andr\'e-Quillen homology to $\calX^\bullet$ yields a cosimplicial vector space $H_*\calX^\bullet$ which is weakly equivalent to its coaugmentation $H_*X$ (cf. \cite{BlumRiehlResolutions.pdf}).

\section{The Adams tower}\label{sec:derWRTab}

For any functor $F:s\calC\to s\calC$, we define the $r^\textup{th}$ derivation $\dupdown{r}{b}F$ of $F$ with respect to Andr\'e-Quillen homology. The definition is recursive:
\begin{alignat*}{2}
(\dupdown{0}{b}F)(X)
&:=
F(cX)%
\\
(\dupdown{s}{b}F)(X)
&:=
\hofib((\dupdown{s-1}{c}F)(cX)\xrightarrow{(\dupdown{s-1}{c}F)(\eta_{cX})} (\dupdown{s-1}{c}F)(KQcX))
\end{alignat*}
where $\eta$ is the unit of the adjunction $Q\dashv K$, i.e.\ the natural surjection onto indecomposables, and $\hofib$ is any fixed  functorial construction of the homotopy fiber. These functors fit into a tower via the following composite natural transformation:
\[\delta:\left((\dupdown{s}{c}F)(X)\to (\dupdown{s-1}{c}F)(cX)\overset{(\dupdown{s-1}{c}F)(\epsilon)}{\to} (\dupdown{s-1}{c}F)(X)\right).\]
We have thus constructed a tower
\[\xymatrix@R=4mm{
\cdots 
\ar[r]
&%r1c1
(\dupdown{2}{c}F)X
\ar[r]
&%r1c3
%r1c4
(\dupdown{1}{c}F)X
\ar[r]&%r1c5
(\dupdown{0}{c}F)X=FcX,
}\]
which is natural in the object $X$ and the functor $F$.
The functors $\dupdown{r}{c}F$ are homotopical as long as $F$ preserves weak equivalences between cofibrant objects. Employing the shorthand
\[\dupdown{s}{c}X:=(\dupdown{s}{c}I)X,\]
we define \emph{the Adams tower of $X$} to be the tower
\[\xymatrix@R=4mm{
\cdots 
\ar[r]
&%r1c1
\dupdown{2}{c}X
\ar[r]
&%r1c3
%r1c4
\dupdown{1}{c}X
\ar[r]&%r1c5
\dupdown{0}{c}X=cX.
}\]

For example, $(\dupdown{2}{c}F)(X)$ is constructed by the following diagram in which every composable pair of parallel arrows is \emph{defined} to be a homotopy fiber sequence.
\[\def\labelstyle{\scriptstyle}
\xymatrix@!0@R=30pt@C=43pt{
(\dupdown{2}{c}F)(X)\ar[d]\\
(\dupdown{1}{c}F)(cX) \ar[rr]\ar[d]         &           &FcccX \ar[rr]\ar[d]         &           &   FcKQccX            \ar[d]  &                  \\
(\dupdown{1}{c}F)(KQcX) \ar[rr] &                     &  FccKQcX \ar[rr] &             & FcKQcKQcX
}\]
In general, $(\dupdown{n+1}{c}F)(X)$ is the homotopy total fiber of an $(n+1)$-cubical diagram:
\[(\dupdown{n+1}{c}F)(X):=\textup{hototfib} \bigl(({D}_{n+1}^{\smash{\square}}F)X\bigr).\]
See \cite{GoodwillieCalcII}, \cite{LuisGoodwillie.pdf} or \cite{CubicalHomotopyTheory.pdf} for the general theory of cubical diagrams. Before defining the cubical diagram $({D}_{n+1}^{\smash{\square}}F)X$, we set notation: for $n\geq0$ let $[n]=\{0,\ldots,n\}$, and define $\calP[n]=\left\{S\subseteq [n]\right\}$ to be the poset category whose morphisms are the inclusions $S\subseteq S'$. Then an $(n+1)$-cube in $s\calC$ is a functor $\calP[n]\to s\calC$, and the $(n+1)$-cubical diagram $({D}_{n+1}^{\smash{\square}}F)X:\calP[n]\to s\calC$ is the functor:
\[S\mapsto Fc(KQ)^{\chi_{n}}c(KQ)^{\chi_{n-1}}c\cdots c(KQ)^{\chi_0}cX\quad \textup{where}\quad \chi_i:=\begin{cases}
1,&\textup{if }i\in S;\\
0,&\textup{if }i\notin S,
%\\,&\textup{if }
\end{cases}
\]
where for $S\subseteq S'$, the map $(({D}_{n+1}^{\smash{\square}}F)X)(S)\to (({D}_{n+1}^{\smash{\square}}F)X)(S')$ is given by applying the counit $\eta:1\to KQ$ in those locations indexed by $S'\setminus S$.

\section{Relationship between the Adams tower and Radulescu-Banu's resolution}\label{sec:relnWithRB}

Radulescu-Banu defines the completion of $X$ to be the totalization
\[X\hat{\ }:=\textup{Tot}(\calX^\bullet)=\holim (\textup{Tot}_n(\calX^\bullet)),\]
and the unstable Adams spectral sequence to be the spectral sequence of the tower
\[\cdots \to\textup{Tot}_n(\calX^\bullet)\to \textup{Tot}_{n-1}(\calX^\bullet)\to\cdots \]
under $cX$. Our goal in this section is to prove
\begin{prop}\label{towerIdentification}
There is a natural zig-zag of weak equivalences of towers between $D_{n+1}X$ and $\hofib(cX\to\textup{Tot}_{n}(\calX^\bullet))$. That is, up to homotopy, the $\textup{Tot}$ tower induces the Adams tower by taking fibers.
\end{prop}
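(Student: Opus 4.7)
The strategy is to realize both sides of the asserted equivalence as total homotopy fibers of naturally associated $(n+1)$-cubical diagrams, and then to construct a vertexwise weak equivalence between these cubes, compatible with the tower structure as $n$ varies.

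On the Radulescu-Banu side, I will invoke the standard cubical reformulation of the $\textup{Tot}$-tower (see e.g.\ \cite{CubicalHomotopyTheory.pdf}): any coaugmented cosimplicial object $Y^{-1}\to Y^\bullet$ gives rise to a natural $(n+1)$-cube $\calB_n Y^\bullet : \calP[n]\to s\calC$, whose vertex at $S\subseteq[n]$ is $Y^{|S|-1}$ (with $Y^{-1}$ placed at $S=\emptyset$) and whose edge maps are coface maps of $Y^\bullet$ chosen according to the position of the added index. The total homotopy fiber of $\calB_n Y^\bullet$ is naturally equivalent to $\hofib(Y^{-1}\to\textup{Tot}_n Y^\bullet)$, and these equivalences assemble into a map of towers as $n$ varies. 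Applied to $\calX^\bullet$ with $Y^{-1}=cX$, this identifies the right-hand tower with $\{\textup{hototfib}(\calB_n\calX^\bullet)\}_n$.

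Next I compare the cubes $D_{n+1}^\square X$ and $\calB_n\calX^\bullet$ directly. The vertex $D_{n+1}^\square X(S)=c(KQ)^{\chi_n}c\cdots c(KQ)^{\chi_0}cX$ consists of $|S|$ factors of $KQ$ interleaved with $n+2$ factors of $c$, whereas $\calB_n\calX^\bullet(S)=c(KQc)^{|S|}X$ has the same $|S|$ factors of $KQ$ together with only $|S|+1$ factors of $c$. Iterated application of the counit $\epsilon:c\to 1$—which is a weak equivalence wherever it is applied to a cofibrant object, as is the case throughout this picture—collapses the $n+1-|S|$ excess factors of $c$ and supplies a natural transformation $D_{n+1}^\square X\to\calB_n\calX^\bullet$ that is a vertexwise weak equivalence. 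The essential coherence check is that this collapse is actually a morphism of cubes: in $D_{n+1}^\square X$ the face maps insert $\eta:1\to KQ$ between two copies of $c$, while in $\calB_n\calX^\bullet$ the relevant coface of $\calX^\bullet$ is the composite $c\xrightarrow{\psi}cc\xrightarrow{c\eta c}cKQc$; the comonad identity $(\epsilon c)\circ\psi=\textup{id}_c$, combined with naturality of $\epsilon$ and $\eta$, supplies the needed commutativity. Taking total homotopy fibers yields the desired equivalence $D_{n+1}X\simeq\hofib(cX\to\textup{Tot}_n\calX^\bullet)$.

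The principal obstacle will be the combinatorial bookkeeping in the cube comparison: verifying that the vertexwise $\epsilon$-collapses assemble into a morphism of cubes requires comparing, on each $2$-face of $D_{n+1}^\square X$, an $\eta$-insertion against a $\psi$-twisted coface of $\calB_n\calX^\bullet$, and the comonad identities for $c$ must be applied carefully at each step. Once this is in hand, naturality in $n$ comes for free—the tower maps $D_{n+1}X\to D_n X$ and $\textup{Tot}_n\to\textup{Tot}_{n-1}$ both correspond to forgetting one cube direction followed by a further $\epsilon$-collapse—and the resulting natural zig-zag of tower equivalences is precisely the conclusion of the proposition.
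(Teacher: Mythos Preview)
Your overall strategy---realize both towers as total homotopy fibers of $(n+1)$-cubes and then compare the cubes by a vertexwise weak equivalence---is exactly the paper's approach. The gap is in the \emph{direction} of your comparison map. You propose a map $D_{n+1}^{\square}X\to\calB_n\calX^\bullet$ by collapsing the excess copies of $c$ via the counit $\epsilon$, and you assert that the comonad identity $(\epsilon c)\circ\psi=\textup{id}_c$ makes this a morphism of cubes. But the edge maps in $\calB_n\calX^\bullet$ have the form $c\xrightarrow{\psi}cc\xrightarrow{c\eta c}cKQc$, so already on the simplest face (the $n=0$ case) the square you must check is
\[
\xymatrix@R=4mm{
ccX \ar[r]^-{c\eta c} \ar[d]_-{\text{collapse}} & cKQcX \ar[d]^-{\textup{id}} \\
cX \ar[r]_-{(c\eta c)\circ\psi} & cKQcX
}
\]
and commutativity would force $\psi\circ(\text{collapse})=\textup{id}_{cc}$. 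That is the comonad identity in the \emph{wrong order}: you have $(\epsilon c)\circ\psi=\textup{id}_c$ and $(c\epsilon)\circ\psi=\textup{id}_c$, but neither $\psi\circ(\epsilon c)$ nor $\psi\circ(c\epsilon)$ is the identity on $cc$ (either would force $\epsilon$ to be an isomorphism). No amount of naturality of $\epsilon$ and $\eta$ repairs this, and the same obstruction recurs on every face of every cube.

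The fix is to reverse the arrow. Define instead a map $\calB_n\calX^\bullet\to D_{n+1}^{\square}X$ by \emph{inserting} the missing copies of $c$ via iterated applications of the comonad diagonal $\psi$. The edge maps in $\calB_n\calX^\bullet$ already contain a $\psi$, so the compatibility check on each face becomes the assertion that two iterated diagonals $c\to c^m$ agree; this is precisely coassociativity of $\psi$, and no counit identity is required. This is what the paper does, and with that single change your outline goes through.
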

Before giving the proof, we recall a useful relationship between cosimplicial objects and cubical diagrams, explained by Sinha in \cite[Theorem 6.5]{SinhaSpacesOfKnots.pdf}, and expanded on by Munson-Voli\'c \cite{CubicalHomotopyTheory.pdf}.
We will only present that part of the theory that we need. There is a diagram of inclusions of categories
\[\xymatrix@!0@R=10mm@C=13mm{
\calP[-1]\ar[rr]^-{\tau}\ar_-{h_{-1}}[drrr]
&&%r1c1
\calP[0]\ar[rr]^-{\tau}\ar^(.65){\!h_{0}}[dr]
&&%r1c2
\calP[1]\ar[rr]^-{\tau}\ar_(.65){h_{1}\!}[dl]
&&%r1c3
\calP[2]\ar[rr]^-{\tau}\ar^-{h_{2}}[dlll]
&&%r1c4
\cdots \\
&&&\Delta_+\!\!
}\]
As the coaugmented cosimplicial object $\calX^\bullet$ is Reedy fibrant (cf.\ \cite[{X.4.9}]{YellowMonster}), there are natural weak equivalences $\hofib(\calX^{-1}\to\textup{Tot}_n\calX^\bullet) \overset{\smash{\sim}}{\to} \textup{hototfib}(h_n^*(\calX^\bullet))$ under which the tower map 
\[\hofib(\calX^{-1}\to\textup{Tot}_n\calX^\bullet)\to \hofib(\calX^{-1}\to\textup{Tot}_{n-1}\calX^\bullet)\]
is identified with the map
\[\textup{hototfib}(h_n^*(\calX^\bullet))\to \textup{hototfib}(\tau^*h_n^*(\calX^\bullet))=\textup{hototfib}(h_{n-1}^*(\calX^\bullet)).\]
As $\calX^{-1}$ equals $cX$, the tower $\hofib(\calX^{-1}\to\textup{Tot}_n\calX^{\bullet})$ is one of the towers in proposition \ref{towerIdentification}.
\begin{proof}[Proof of Proposition \ref{towerIdentification}] 
It will suffice to construct a weak equivalence $h_n^*\calX^\bullet\to (D_n^{\smash{\square}}I)(X)$ of $(n+1)$-cubes. The $(n+1)$-cubical diagram $h_n^*\calX^\bullet$ is defined by
\[(h_n^*\calX^\bullet)(S):= c(KQc)^{\chi_{n}}(KQc)^{\chi_{n-1}}\cdots (KQc)^{\chi_0}X\quad \textup{where}\quad \chi_i:=\begin{cases}
1,&\textup{if }i\in S,\\
0,&\textup{if }i\notin S,
\end{cases}
\]
where we describe the map $(h_n^*\calX^\bullet)(S)\to (h_n^*\calX^\bullet)(S\sqcup\{i\})$, for $i\notin S$, as follows. Let $j$ be the smallest element of $S\sqcup\{n+1\}$ exceeding $i$, so that
\[(h_n^*\calX^\bullet)(S):= \begin{cases}
c(KQc)^{\chi_{n}}\cdots (KQc)^{\chi_{j+1}}(KQ\underline{c})(KQc)^{\chi_{i-1}}\cdots (KQc)^{\chi_0}X,&\textup{if }j\leq n;\\
\underline{c}(KQc)^{\chi_{i-1}}\cdots (KQc)^{\chi_0}X,&\textup{if }j=n+1.
%\\,&\textup{if }
\end{cases}
\]
In the expression for either case, we have distinguished one of the applications of $c$ with an underline, and the map to $(h_n^*\calX^\bullet)(S\sqcup\{i\})$ is induced by the composite $\underline{c}\to cc\to cKQc$ of the diagonal of the comonad $c$ with the unit of the monad $KQ$. 

We now define maps $(h_n^*\calX^\bullet)(S)\to (({D}_{n+1}^{\smash{\square}}I)X)(S)$ for $S=\{j_0<j_1<\cdots<j_r\}\subset\{0,\ldots,n\}$. 
The only difference between the domain and codomain is that in $(({D}_{n+1}^{\smash{\square}}I)X)(S)$, all $n+2$ applications of $c$ are present, whereas in $(h_n^*\calX^\bullet)(S)$, only $r+2$ appear. The map is then
\[\psi^{n-j_r}KQ\psi^{j_r-j_{r-1}-1}KQ\psi^{j_{r-1}-j_{r-2}-1}KQ\cdots KQ\psi^{j_{1}-j_0-1}KQ\psi^{j_0}X\]
which is to say that we apply the iterated diagonal the appropriate number of times in each $c$ appearing in the domain. As $\psi$ is coassociative, this definition is unambiguous, and the resulting maps assemble to a weak equivalence of $(n+1)$-cubes. 
\end{proof}

\section{Connectivity estimates in the Adams tower}
\label{sec:connectivityAnalysis}
We wish to prove the following connectivity result for the Adams tower:
\begin{prop}\label{convergenceProp}
For integers $t\geq2$ and $q\geq0$, the map $\pi_q(\dupdown{2t+q-1}{c}X)\to\pi_q(\dupdown{t}{c}X)$ is zero.
\end{prop}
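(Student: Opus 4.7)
The plan is to establish Proposition~\ref{convergenceProp} by a connectivity analysis on the Adams tower, leveraging the cubical description $\dupdown{s+1}{c}X = \textup{hototfib}((D^{\smash{\square}}_{s+1}I)X)$ from Section~\ref{sec:derWRTab} together with the doubling behaviour of the connectivity of the unit $\eta : 1 \to KQ$.

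I would first replace the cubical construction with an equivalent one phrased in terms of the standard comonadic bar construction, as foreshadowed in the introduction. The nested cofibrant replacements $c$ appearing at the vertices of the cube are awkward because squaring in $\calC$ is not homotopy invariant; working directly with the cube forces one to keep track of many $c$'s. Using the comonad diagonal $\psi : c \to cc$ together with the weak equivalence $\epsilon_Y : cY \xrightarrow{\smash{\sim}} Y$ on cofibrant $Y$, one can collapse all but the outermost $c$ and identify $\dupdown{s+1}{c}X$ up to weak equivalence with the homotopy fiber of $cX$ into a suitable partial totalization of the monadic bar construction $B_\bullet(KQ)(cX)$, for which connectivity estimates are standard.

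The crucial input is the connectivity of the unit $\eta_Y : Y \to KQY$: its fiber is $Y^2$, a quotient of $Y \otimes Y$, which is $(2n+1)$-connected whenever $Y$ is $n$-connected by the K\"unneth theorem over the field $k$. Hence $\eta_Y$ itself is a $(2n+2)$-equivalence. Feeding this into a higher Blakers--Massey estimate for the $(s+1)$-cube (\cite{GoodwillieCalcII}, \cite{CubicalHomotopyTheory.pdf}) then produces a connectivity lower bound for $\dupdown{s+1}{c}X$ that grows suitably in $s$ whenever $X$ is connected. I would formalise this by induction on $s$, using the defining fiber sequence $\dupdown{s}{c}X \to \dupdown{s-1}{c}(cX) \to \dupdown{s-1}{c}(KQcX)$ and the fact that $\dupdown{s-1}{c}$ is homotopical on cofibrant inputs, so that the connectivity of $\eta_{cX}$ transfers to the derived map.

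Finally, the connectivity estimate translates into the claimed vanishing via the long exact sequence
\[ \pi_{q+1}(\dupdown{s-1}{c}(KQcX)) \to \pi_q(\dupdown{s}{c}X) \to \pi_q(\dupdown{s-1}{c}X) \to \pi_q(\dupdown{s-1}{c}(KQcX)) \]
arising from this fiber sequence. An induction on $t$, with base case $t=2$, then chains the connectivity estimates down the Adams tower to conclude that $\pi_q(\dupdown{2t+q-1}{c}X) \to \pi_q(\dupdown{t}{c}X)$ is zero. The main obstacle will be the precise bookkeeping of connectivity through the cube: the doubling behaviour of $\eta$ is easy to state, but propagating it through the nested structure of $(D^{\smash{\square}}_{s+1}I)X$ while accounting for the multiple $c$'s is delicate, and obtaining exactly the linear constant $2t+q-1$ requires the cleaner control afforded by the comonadic bar construction. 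Balancing the cubical connectivity estimate against the LES, step by step in the induction, is where the delicate numerics enter.
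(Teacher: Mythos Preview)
Your first move---replacing the SOA functor $c$ by the comonadic bar construction $b$ and passing to strict fibers---matches the paper's Proposition~\ref{prop:modifiedAdamsTower}, and your identification of the fiber of $\eta_Y$ with $Y^2$ is exactly the seed of the argument. But from that point the proposal diverges from the paper and, as written, has a genuine gap.

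The difficulty is that you are aiming to show that the connectivity of $D_sX$ itself grows with $s$, via a higher Blakers--Massey estimate on the cube $(D^{\smash{\square}}_{s+1}I)X$. Two problems arise. First, that cube is not strongly cocartesian: the sub-squares obtained by inserting $\eta$ in two distinct slots are not pushouts in $s\calC$, so the Goodwillie--style excision theorems do not apply in the form you invoke. Second, and more fundamentally, all the intermediate objects $KQc\cdots X$ remain only $0$-connected, so the ``doubling'' of $\eta$ does not compound along the tower in the way you suggest; there is no evident mechanism by which the connectivity of $D_sX$ climbs linearly. Indeed the paper never claims that $\pi_q(D_{2t+q-1}X)$ vanishes, only that the \emph{map} to $\pi_q(D_tX)$ is zero, and this distinction is essential.

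What the paper does instead is algebraic rather than excision-theoretic. It introduces the power functors $P^s(Y)=Y^s$ and builds a map of towers $\calD_r\to P^{r+1}$ (Lemma~\ref{towerWithPowers}), so that applying $\calD_t(\DASH)$ yields a factorisation of the composite $\calD_{t}\delta\circ\cdots\circ\calD_t\delta:\calD_{2t+q-1}X\to\calD_tX$ through $(\calD_tP^{t+q})X$. A direct computation with Quillen's fundamental spectral sequence and Dold--Puppe shows this intermediate object is $q$-connected (Lemma~\ref{connectivityOfDerivedPowers}). The remaining ingredient---entirely absent from your outline---is the Twisting Lemma~\ref{DsDt=Dt+s}: the maps $\calD_i\delta$ for varying $i$ are all simplicially homotopic, so the composite of $\calD_t\delta$'s is homotopic to the actual tower map $\delta^{t+q-1}$. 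Without this twisting step one cannot identify the factored composite with the map in the statement, and this is where the precise constant $2t+q-1$ comes from. Your long-exact-sequence induction does not substitute for it.
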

\noindent We defer the proof until the end of this section. Note that proposition \ref{towerIdentification} and \ref{convergenceProp} together imply the completeness theorem:
\begin{proof}[Proof of completeness theorem]
The fiber sequences $\dupdown{n+1}{c}X\to cX\to \textup{Tot}_n\calX^\bullet$ fit together into a tower of fiber sequences. Taking homotopy limits, one obtains a fiber sequence
\[\holim (\dupdown{n}{c}X)\to cX\to X\hat{\ }.\]
We need to show that $\holim (\dupdown{n}{c}X)$ has zero homotopy groups.
We may replace the tower $\dupdown{n}{c}X$ with a weakly equivalent tower of fibrations in $s\calC$ whose set-theoretic inverse limit is the homotopy limit in question. Applying \cite[Proposition 6.14]{goerss-jardine.pdf} to the new tower, there is a short exact sequence
\[0\to \textup{lim}^1\pi_{q+1}(\dupdown{n}{c}X)\to \pi_q(\holim (\dupdown{n}{c}X))\to \textup{lim}\,\pi_{q}(\dupdown{n}{c}X)\to 0.\]
Proposition \ref{convergenceProp} implies that for each $q$, the tower $\{\pi_{q}(\dupdown{n}{c}X)\}$ has zero inverse limit, and satisfies the Mittag-Leffler condition (cf.\ \cite[p.264]{YellowMonster}), so that the $\textup{lim}^1$ groups appearing also vanish.
\end{proof}
With an eye to proving proposition \ref{convergenceProp} we define a somewhat less homotopical version $\caldup{s}F$ of the derivations $\dupdown{s}{b}F$ of $F$. Again, the definition is recursive:
\begin{alignat*}{2}
(\caldup{0}F)(X)
&:=
F(X)%
\\
(\caldup{s}F)(X)
&:=
\ker((\caldup{s-1}F)(bX)\xrightarrow{(\caldup{s-1}F)(\eta_{bX})} (\caldup{s-1}F)(KQbX))
\end{alignat*}
There are three differences between this definition and that of $D_sF$: here, there is one fewer cofibrant replacement applied, we use the standard simplicial bar construction $b$ (see Appendix \ref{sec:ItSimpBar}) instead of the SOA functor $c$, and we take \emph{strict} fibers, not \emph{homotopy} fibers.
While these functors are not generally homotopical, we define \emph{the modified Adams tower of $X$} to be the tower
\[\xymatrix@R=4mm{
\cdots 
\ar[r]
&%r1c1
\caldup{2}X
\ar[r]
&%r1c3
%r1c4
\caldup{1}X
\ar[r]&%r1c5
\caldup{0}X=X,
}\]
where $\caldup{s}X$ is again shorthand for $(\caldup{s}I)X$, and the tower maps $\delta$ are defined as before.
\begin{prop}\label{prop:modifiedAdamsTower}
There's a natural zig-zag of weak equivalences of towers between the Adams tower of $X$ and the modified Adams tower of $X$. In particular, the modified tower is homotopical.
\end{prop}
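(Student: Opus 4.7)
The two towers differ in three respects: $\caldup{\bullet}$ uses the simplicial bar construction $b$ in place of $c$, applies one fewer initial resolution, and takes strict rather than homotopy fibers at each stage. I plan to bridge these differences by interpolating through a hybrid tower $\calE_s F$ defined by $\calE_0 F(X) := F(bX)$ and
\[
\calE_s F(X) := \hofib\bigl(\calE_{s-1} F(bX) \to \calE_{s-1} F(KQbX)\bigr),
\]
which uses the bar construction $b$ (as in the modified tower) but retains the extra initial resolution and the homotopy fibers (as in the Adams tower).

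The first comparison, $\dupdown{s}{c} F \simeq \calE_s F$, is established as follows. Both $c$ and $b$ are functorial cofibrant replacements, so applying $c$ to the weak equivalence $bX \to X$ yields a natural zig-zag $cX \xleftarrow{\sim} cbX \xrightarrow{\sim} bX$ of weak equivalences between cofibrant objects. I would apply this zig-zag vertex-by-vertex to the cubical diagram $({D}_{n+1}^{\smash{\square}}F)X$ from Section \ref{sec:derWRTab}, and observe that $F$, $KQ$, $b$, and $c$ each preserve weak equivalences between cofibrant objects. Passing to homotopy total fibers then yields the desired zig-zag of towers.

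The second comparison, $\calE_s F \simeq \caldup{s} F$, relies on the observation that $\eta_Y: Y \to KQY = Y/Y^2$ is always a levelwise surjection of simplicial algebras, hence a fibration in the standard model structure on $s\calC$; consequently its strict and homotopy fibers agree up to natural weak equivalence. For the induction, the key point is that a surjection $f: A \to B$ of non-unital algebras induces a surjection $A^2 \to B^2$ on square ideals. Provided $b$ preserves surjections, this propagates through each edge of the cube defining $\caldup{s} F(X)$, so the iterated strict fibers compute the iterated homotopy total fibers. The residual extra $b$ in $\calE_s$ is absorbed via the weak equivalence $bX \to X$ and the cofibrancy of $bX$. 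The principal obstacle is maintaining surjectivity through each stage of the iteration, which I would handle using the exactness properties of the simplicial bar construction from the appendix and the fact that $Q$, being a left adjoint, preserves cokernels.
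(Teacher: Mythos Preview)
Your strategy matches the paper's almost exactly. Your hybrid tower $\calE_sF$ is precisely what the paper calls $D_s^bF$, and the paper's zig-zag
\[D_s^cI\;\longleftarrow\;D_s^{b\circ c}I\;\longrightarrow\;D_s^bI\;\overset{\gamma_s}{\longleftarrow}\;\caldup{s}b\;\overset{\caldup{s}\epsilon}{\longrightarrow}\;\caldup{s}\]
is organised just as you suggest: the left two arrows interpolate between the cofibrant replacements (your ``vertex-by-vertex'' step, made precise by introducing the category $\mathsf{CR}(s\calC)$ of cofibrant replacement functors and noting $D_s^fF$ is natural in $f$), $\gamma_s$ compares strict with homotopy fibres, and $\caldup{s}\epsilon$ absorbs the extra $b$.

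There is, however, a genuine gap in how you plan to handle surjectivity. You correctly identify that maintaining surjectivity through the iteration is the principal obstacle, but your proposed tools do not address it. The appendix contains no exactness statements (it constructs a simplicial homotopy), and the observation that a surjection $A\to B$ induces a surjection $A^2\to B^2$ only handles the first stage: knowing that $\caldup{s-1}$ preserves surjections and that $\caldup{s-1}(bX)\to\caldup{s-1}(KQbX)$ is surjective does \emph{not} formally imply that $\caldup{s}$ preserves surjections, since the snake lemma leaves a possible cokernel. The paper resolves this non-inductively in Lemma~\ref{towerWithPowers}: it writes down an explicit monomial basis of $(\caldup{r}X)_n\subset (T^{n+1})^rX_n$ (the iterated monomials whose outer degree at each $(n+1)i^{\textup{th}}$ layer is at least two), from which preservation of surjections by $\caldup{r}$ and $\caldup{r}b$ is immediate. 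This same lemma is what makes both $\gamma_s$ and $\caldup{s}\epsilon$ weak equivalences in the inductive step --- so your ``absorb the extra $b$'' step also leans on it, not merely on the cofibrancy of $bX$.
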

\begin{proof}
Let $\mathsf{CR(s\calC)}$ be the category of cofibrant replacement functors in $s\calC$. That is, an object of $\mathsf{CR(s\calC)}$ is a pair, $(f,\epsilon)$, such that $f:s\calC\to s\calC$ is a functor whose image consists only of cofibrant objects, and $\epsilon:f\Rightarrow I$ is a natural acyclic fibration. Morphisms in $\mathsf{CR(s\calC)}$ are natural transformations which commute with the augmentations. For any $(f,\epsilon)\in\mathsf{CR(s\calC)}$ we obtain an alternative definition of the derivations of a functor $F:s\calC\to s\calC$:
\begin{alignat*}{2}
(D_{\smash{0}}^{\smash{f}}F)(X)
:=
F(fX),\quad %
(D_{\smash{s}}^{\smash{f}}F)(X)
:=
\hofib((D_{\smash{s-1}}^{\smash{f}}F)(fX)\to (D_{\smash{s-1}}^{\smash{f}}F)(KQfX)).
\end{alignat*}
These functors are natural in $f$, so that a morphism in $\mathsf{CR(s\calC)}$ induces a weak equivalence of towers. Our proposed zig-zag of towers is: 
\[D_{\smash{s}}= D_{\smash{s}}^{\smash{c}}I\overset{}{\from}D_{\smash{s}}^{\smash{b\circ c}}\!I\overset{}{\to}D_{\smash{s}}^{\smash{b}}I\overset{\gamma_s}{\from}\caldup{s}b\overset{\caldup{s}\epsilon}{\to}\caldup{s}I=\caldup{s}\]
The maps with domain $D_{\smash{s}}^{\smash{b\circ c}}I$ are induced by the maps $\epsilon c:b\circ c\to c$ and $b\epsilon:b\circ c\to b$ and are evidently natural weak equivalences of towers. The map $\gamma_0:(\caldup{0}b)X\to (D_{\smash{0}}^{\smash{b}}I)X$ is the identity of $bX$, and the map $\caldup{0}\epsilon:(\caldup{0}b)X\to (D_{\smash{0}}^{\smash{b}}I)X$ is $\epsilon:bX\to X$. Thereafter, $\gamma_s$ and $\caldup{s}\epsilon$ are defined recursively:
\[\qquad \quad \xymatrix@R=4mm{
\makebox[0cm][r]{$(\caldup{s+1}I)X:=\,$}\makebox[5cm][l]{$\,\ker((\caldup{s}I)(bX)\to (\caldup{s}I)(KQbX))$}\\
\makebox[0cm][r]{$(\caldup{s+1}b)X:=\,$}\makebox[5cm][l]{$\,\ker((\caldup{s}b)(bX)\to (\caldup{s}b)(KQbX))$}\ar[d]^-{\textup{incl.}}_-{}%m_2}
\ar[u]_-{\textup{induced by }(\caldup{s}\epsilon,\caldup{s}\epsilon)}^-{}%m_1}
\\%r1c1
\makebox[5cm][l]{$\,\hofib((\caldup{s}b)(bX)\to(\caldup{s}b)(KQbX))$}\ar[d]^-{\textup{induced by }(\gamma_{s},\gamma_{s})}_-{}%m_3}
\\%r2c1
\makebox[0cm][r]{$(D_{\smash{s+1}}^{\smash{b}})X:=\,$}\makebox[5cm][l]{$\,\hofib((D_{\smash{s}}^{\smash{b}}I)(bX)\to (D_{\smash{s}}^{\smash{b}}I)(KQbX))$}%r3c1
}\]
\noindent Lemma \ref{towerWithPowers} shows that the kernels taken are actually kernels of surjective maps, and by induction on $s$, the maps $\gamma_s$ and $\caldup{s}\epsilon$ are weak equivalences.
\end{proof}
The connectivity result will rely on the observation that any element in the $s^\textup{th}$ level of the modified tower maps down to an $(s+1)$-fold product in $X$. In order to formalise this, let $P^s:s\calC\to s\calC$ be the ``$s^\textup{th}$ power'' functor, the prolongation of the endofunctor $Y\mapsto Y^s$ of $\calC$, where $Y^s=\textup{im}(\textup{mult}:Y^{\otimes s}\to Y)$. Then we have:
\begin{lem}\label{towerWithPowers}
The functors $\caldup{r}$, $\caldup{r}b$ and $\caldup{r}P^{s}$ preserve surjective maps and there is a commuting diagram of functors:
\[\xymatrix@R=4mm{
\cdots 
\ar[r]
&%r1c1
\caldup{r}
\ar[r]
\ar[d]
&%r1c2
\cdots \ar[r]
&%r1c4
\caldup{2}
\ar[r]
\ar[d]
&%r1c4
\caldup{1}
\ar[r]
\ar[d]&%r1c5
\caldup{0}
\ar@{=}[d]
\\%r1c6
\cdots
\ar[r]
&%r2c1
P^{r+1}
\ar[r]
&%r2c3
\cdots 
\ar[r]&%r2c4
P^3
\ar[r]
&P^2
\ar[r]
&%r2c5
I%r2c6
}\]
\end{lem}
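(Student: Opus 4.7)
The plan is to proceed by induction on $r$, establishing both claims simultaneously. The base case $r = 0$ is immediate: $\caldup{0} = I$, $\caldup{0}b = b$, and $\caldup{0}P^s = P^s$ all preserve surjections—trivially for $I$, by the appendix for $b$, and by lifting factors of an $s$-fold product for $P^s$—and the natural transformation $\caldup{0} = I \to P^1 = I$ is the identity.

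For the inductive step, I first observe that $\eta_Y\colon Y \to KQY$ is always surjective with kernel $P^2 Y$. By the inductive hypothesis, each $\caldup{r-1}G$ preserves surjections, so $(\caldup{r-1}G)(\eta_{bX})$ is surjective and $\caldup{r}G(X)$ is the kernel of a surjective map; this is the parenthetical claim needed in the proof of Proposition \ref{prop:modifiedAdamsTower}. The technical heart is then to establish by induction the explicit formula $\caldup{r}(X) = (P^2 b)^r X$, with analogous formulas $\caldup{r}b(X) = b(P^2 b)^r X$ and $\caldup{r}P^s(X) = P^s b(P^2 b)^{r-1} X$ for $s \geq 2$. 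This relies on two properties of $b$: that it preserves kernels of the surjections $\eta$, and the \emph{intersection-equals-product} identity $(bY)^s \cap b(Z) = (bZ)^s$ for any subalgebra inclusion $Z \subseteq Y$. Both follow from the appendix, using that $b$ is built levelwise from a free-algebra functor $T$, for which $(TV)^s \cap TW = (TW)^s$ thanks to the symmetric-power decomposition $TV = \bigoplus_n \mathrm{Sym}^n V$. With the explicit formula in hand, surjection preservation by $\caldup{r}G$ is immediate, since each of $P^s$, $b$, and $G$ preserves surjections.

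For the natural transformation $\caldup{r} \to P^{r+1}$, I apply $\epsilon^r\colon b^r X \to X$ to $\caldup{r}X = (P^2 b)^r X \subseteq b^r X$. Because $\epsilon$ is an algebra map, each layer $P^2 b(-)$ converts under $\epsilon$ into a $P^2(-)$ layer on the image, so the composite lands in $(P^2)^r X = P^{2^r} X \subseteq P^{r+1} X$ (since $2^r \geq r + 1$ for $r \geq 1$). Commutativity with the tower maps on both rows of the diagram is routine from naturality of $\epsilon$ and the $P^s$-inclusions. The main obstacle is isolating the key identity $(bY)^s \cap b(Z) = (bZ)^s$, which drives the explicit formula; once it is in place, the rest of the argument is essentially formal.
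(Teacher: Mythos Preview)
Your inductive strategy is reasonable, but the ``explicit formula'' $\caldup{r}X=(P^{2}b)^{r}X$ is false, and the supporting claim that $b$ ``preserves kernels of the surjections $\eta$'' is the step that fails. The functor $T=\text{free}\circ\text{forget}$ does \emph{not} take the kernel of a surjection of algebras to the kernel of the induced map: for a linear surjection $V\to V/W$, the kernel of $TV\to T(V/W)$ is the \emph{ideal} generated by $W$, which is strictly larger than the subalgebra $TW$ whenever $W\neq V$. Concretely, in simplicial degree~$0$ with $Y=TX_{0}$, the degree-two formal monomial $y\cdot z\in T(Y)$, with $y\in Y\setminus Y^{2}$ and $z\in Y^{2}$, lies in $\ker\bigl(T(\eta_{Y})\bigr)$ but not in $T(Y^{2})$. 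Hence $\ker(b\eta_{bX})\supsetneq b(P^{2}bX)$, and one computes
\[
(\caldup{2}X)_{0}=(T^{2}X_{0})^{2}\cap\ker\bigl(T\eta\bigr)
=\operatorname{span}\{\,m_{1}\cdots m_{k}:k\geq2,\ \text{some }\deg m_{j}\geq2\,\},
\]
whereas $((P^{2}b)^{2}X)_{0}$ imposes $\deg m_{j}\geq2$ for \emph{all} $j$. So only the proper inclusion $(P^{2}b)^{r}X\subsetneq\caldup{r}X$ holds, and with it your surjectivity argument and the claimed factorisation through $P^{2^{r}}X$ both collapse: neither follows from the formula you have.

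The paper avoids this by working directly with the description of $\caldup{r}X$ as the intersection $\bigcap_{i=1}^{r}\ker(b^{r-i}\eta b^{i})$ inside $b^{r}X$. Levelwise this is an intersection of $r$ conditions on iterated monomials in $T^{r(n+1)}X_{n}$, each of which is visibly stable under lifting along a surjection $X_{n}\twoheadrightarrow X'_{n}$ (whence $\caldup{r}$ preserves surjections), and each of which forces one extra factor in the product when one pushes down along the iterated counit $b^{r}X\to X$ (whence the image lands in $P^{r+1}X$, not $P^{2^{r}}X$). Your intersection identity $(bY)^{s}\cap bZ=(bZ)^{s}$ is correct and pleasant, but it is not the obstruction; the missing ingredient is a correct identification of $\ker(b\eta)$.
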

\begin{proof}

Suppose that $X$ is a simplicial algebra. Then $\caldup{r}X$
is constructed as the subalgebra
\[\caldup{r}X:= \bigcap_{i=1}^{r}\ker\!\left(b^{r-i}\eta b^{i}:b^{r}X\to b^{r-i}KQb^{i}X\right)\]
of $b^rX$. In dimension $n$, this is the following subset of $(b^rX)_n:=(T^{n+1})^rX_n$:
\[(\caldup{r}X)_n:=\bigcap_{i=1}^{r}\ker\!\left(T^{(r-i)(n+1)}\eta T^{i(n+1)}:(T^{n+1})^rX_n\to (T^{n+1})^{r-i}KQ(T^{n+1})^iX_n\right)\]
As in the appendix, $T$ is the free tensor algebra comonad on $\calC$.
The $r$ conditions on elements of $(\caldup{r}X)_n$ ensure that their image in $(\caldup{0}X)_n=X_n$ under the iterated tower map is a sum of $(r+1)$-fold products. This completes the construction of the tower of functors.

In order to prove the surjectivity statements, we must describe the iterated free construction $T^{r(n+1)}X_n$. A basis of $TX_n$ may be given by the \emph{monomials} in a basis of $X_n$, and $T^{r(n+1)}X_n$ has basis given by taking monomials iteratively, $r(n+1)$ times. The subset $(\caldup{r}X)_n$ has basis those iterated monomials in which the monomials formed in the $((n+1)i)^\textup{th}$ iteration have degree at least two for each $1\leq i\leq r$. This simple description of a basis of $(\caldup{r}X)_n$ shows that $\caldup{r}$ preserves surjections. Similar analysis applies to $\caldup{r}b$ and $\caldup{r}P^s$.
\end{proof}
We are now able to state and prove the key connectivity result:
\begin{lem}\label{connectivityOfDerivedPowers}
For $A\in s\Comm$ connected, and any $t\geq1$ and $s\geq2$, $(\caldup{t}P^{s})(A)$ is $(s-t)$-connected.
\end{lem}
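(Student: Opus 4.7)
The plan is to induct on $t$, simultaneously for all $s\geq 2$. Both the base case and inductive step will be built on the short exact sequence of simplicial vector spaces
\[0\to (\caldup{t}P^s)(A)\to (\caldup{t-1}P^s)(bA)\to (\caldup{t-1}P^s)(KQbA)\to 0,\]
whose exactness on the right is the surjectivity statement of Lemma \ref{towerWithPowers} applied to the functor $P^s$.

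For the base case $t=1$, observe that $KQbA$ has zero multiplication, so $P^s(KQbA)=0$ for $s\geq 2$ and the sequence degenerates to $(\caldup{1}P^s)(A)=P^s(bA)$. To show $P^s(bA)$ is $(s-1)$-connected, I use the multiplication surjection $\mu\colon(bA)^{\otimes s}\twoheadrightarrow P^s(bA)$. Since $bA\simeq A$ is $0$-connected, the K\"unneth formula over the field $k$ gives $\pi_*((bA)^{\otimes s})\cong \pi_*(A)^{\otimes s}$, which vanishes in degrees below $s$; hence $(bA)^{\otimes s}$ is $(s-1)$-connected. The crux of the base case is then to verify that $\ker\mu$ is $(s-2)$-connected: this exploits the fact that $(bA)_n = T^{n+1}A_n$ is a free commutative algebra in each simplicial dimension, so the kernel admits an explicit combinatorial description in terms of the symmetric group action on the tensor product together with the algebra relations of the free construction. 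The long exact sequence for $0\to \ker\mu\to (bA)^{\otimes s}\to P^s(bA)\to 0$ then delivers $(s-1)$-connectivity of $P^s(bA)$.

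For the inductive step, I need two connectivity inputs to the recursion. First, $bA$ is connected because $bA\simeq A$. Second, $KQbA$ is connected: $\pi_0(KQbA)$ is the degree-zero Andr\'e-Quillen homology $H_0(A)$, which vanishes whenever $A$ is connected by Quillen's Hurewicz-style result for simplicial commutative algebras. Granted both inputs, the inductive hypothesis applies to each of $bA$ and $KQbA$, making both outer terms $(s-t+1)$-connected. Taking the long exact sequence in $\pi_*$, for $q\leq s-t$ we have $\pi_{q+1}$ and $\pi_q$ of the outer $(\caldup{t-1}P^s)$-terms both vanishing, which sandwiches $\pi_q((\caldup{t}P^s)(A))$ between zero and zero and thus forces it to vanish.

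The main obstacle is the base case, and specifically the connectivity estimate on $\ker\mu$. The inductive step is essentially formal once one has the short exact sequence from Lemma \ref{towerWithPowers} and knows that $KQbA$ inherits connectedness from $A$; but the base case demands an explicit analysis of the multiplication map, relying on the fact that the bar construction is levelwise a free commutative algebra, in order to transfer the easy K\"unneth connectivity of $(bA)^{\otimes s}$ across the multiplication surjection onto $P^s(bA)$.
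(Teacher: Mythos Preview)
Your inductive step is exactly the paper's: a short exact sequence from Lemma~\ref{towerWithPowers} and the long exact sequence in homotopy. The divergence is entirely in the base case $t=1$, and there your proposal has a genuine gap.

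The paper does \emph{not} attack $P^s(bA)$ via the multiplication surjection $\mu:(bA)^{\otimes s}\twoheadrightarrow P^s(bA)$. Instead it filters $P^s(bA)$ by the higher powers $P^s(bA)\supset P^{s+1}(bA)\supset\cdots$, identifies the associated graded pieces as $(QbA)^{\otimes p}_{\Sigma_p}$ for $p\geq s$, and then invokes the Dold--Puppe connectivity theorem \cite[Satz~12.1]{DoldPuppeSuspension.pdf}: if $V$ is a connected simplicial vector space then $V^{\otimes p}_{\Sigma_p}$ is $(p-1)$-connected. This is the substantive input, and it is indispensable in positive characteristic, where symmetric powers do not split off from tensor powers.

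Your route requires that $\ker\mu$ be $(s-2)$-connected, but you do not prove this; the sentence about ``explicit combinatorial description in terms of the symmetric group action\ldots'' is a placeholder, not an argument. If you try to fill it in, you are forced back to essentially the paper's method: since $bA$ is levelwise free on $W_n:=Q(bA)_n$, both $(bA)^{\otimes s}$ and $\ker\mu$ split by total $W$-weight, and each weight-$m$ piece of $\ker\mu$ is a degree-$m$ polynomial functor of the connected simplicial vector space $QbA$. Establishing the needed connectivity of those pieces is precisely a Dold--Puppe-type statement. So the ``different'' base case is not genuinely different---it merely relocates the hard step into $\ker\mu$, where you have left it unproved. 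Over a field of positive characteristic there is no shortcut via K\"unneth alone: knowing $(bA)^{\otimes s}$ is $(s-1)$-connected says nothing about its quotient $P^s(bA)$ without control of the kernel.

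A minor remark: your justification that $KQbA$ is connected is heavier than necessary. One needs only $\pi_0(QbA)=Q(\pi_0 bA)=Q(0)=0$, since $Q$ preserves the reflexive coequalizer computing $\pi_0$; no Hurewicz-type theorem is required.
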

\begin{proof}
We will prove this by induction on $t$. When $t=1$:
\[(\caldup{1}P^{s})A:=\ker(P^{s}(bA)\to P^{s}(QbA))=P^{s}(bA),\]
so we must show that $P^s(bA)$ is $(s-1)$-connected. For this we use a truncation of Quillen's fundamental spectral sequence, as presented in \cite[Thm 6.2]{MR1089001}: the filtration
\[P^s(bA)\supset P^{s+1}(bA)\supset P^{s+2}(bA)\supset\cdots \]
of $P^s(bA)$ yields a convergent spectral sequence  $E^0_{p,q}\implies \pi_q(P^s(ba))$, with:
\[E^0_{p,q}=N_q\bigl(P^{p}(bA)/P^{p+1}(bA)\bigr)\textup{ if $p\geq s$, and }E^0_{p,q}=0\textup{ if $p<s$.}\]
Examination of the structure of $bA$ reveals that $P^{p}(bA)/P^{p+1}(bA)\cong (QbA)^{\otimes p}_{\Sigma_p}$, and moreover, $QbA$ is a connected simplicial vector space, as $\pi_0(QbA)=Q(\pi_0bA)=Q(0)=0$. The $t=1$ case now follows from \cite[Satz 12.1]{DoldPuppeSuspension.pdf}: if $V$ is a connected simplicial vector space then $V^{\otimes p}_{\Sigma_p}$ is $(p-1)$-connected. 

Now let $t\geq2$ and suppose by induction that $(\caldup{t-1}P^{s})(B)$ is $(s-(t-1))$-connected for any connected $B$ and any $s\geq2$. Then by lemma \ref{towerWithPowers}, there's a short exact sequence:
\[\xymatrix{
0\ar[r]&
(\caldup{t}P^{s})(A)\ar[r]&
(\caldup{t-1}P^{s})(bA)\ar[r]&
(\caldup{t-1}P^{s})(QbA)\ar[r]&
0
}\]
in which the rightmost two objects are each $(s-t+1)$-connected. The associated long exact sequence shows that $(\caldup{t}P^{s})(A)$ is $(s-t)$-connected.
\end{proof}
Before we can give the proof of proposition \ref{convergenceProp}, we need the following \emph{twisting lemma}, analogous to that of \cite{BK_pairings.pdf}. Before stating it, we note that $(\caldup{s}\caldup{t})X$ and $\caldup{s+t}X$ are equal by construction.
\begin{twistinglemma}
\label{DsDt=Dt+s}
The maps $\caldup{i}\delta:\caldup{n}X\to \caldup{n-1}X$ are homotopic for $0\leq i< n$.
\end{twistinglemma}
\begin{proof}
We may reindex the twisting lemma as follows: the maps 
\[\caldup{s}\delta,\caldup{s-1}\delta:\caldup{s+t}X\to \caldup{s+t-1}X\]
are homotopic whenever $s,t\geq1$. Now $\caldup{s+t}X$ is constructed as the subalgebra
\[\caldup{s+t}X:= \bigcap_{i=1}^{s+t}\ker\!\left(b^{s+t-i}\eta b^{i}:b^{s+t}X\to b^{s+t-i}KQb^{i}X\right)\]
of the iterated bar construction $b^{s+t}X$, and for $0\leq i<s+t$, $\caldup{i}\delta$ is the restriction of the map $b^i\epsilon b^{s+t-i-1}:b^{s+t}X\to b^{s+t-1}X$.
Proposition \ref{IteratedBarConstructionHomotopy} gives an explicit simplicial homotopy between the maps $b^s\epsilon b^{t-1}$ and $b^{s-1}\epsilon b^{t}$. Moreover, the naturality of the construction of proposition \ref{IteratedBarConstructionHomotopy} implies that this homotopy does indeed restrict to a homotopy of maps $\caldup{s+t}X\to \caldup{s+t-1}X$.
\end{proof}

Now that we have the twisting lemma, Proposition \ref{convergenceProp} follows:
\begin{proof}[Proof of Proposition \ref{convergenceProp}]
By proposition \ref{prop:modifiedAdamsTower}, it is enough to prove that $\pi_q(\caldup{2t+q-1}X)\to\pi_q(\caldup{t}X)$ is zero.
Apply $\caldup{t}\DASH$ to the diagram of functors constructed in \ref{towerWithPowers} and apply the result to $X$ to obtain a commuting diagram of functors
\[\xymatrix@R=4mm{
\caldup{2t+q-1}X
\ar[r]^-{\caldup{t}\delta}
\ar[d]
&
\cdots \ar[r]^-{\caldup{t}\delta}
&%r1c4
\caldup{t+1}X
\ar[r]^-{\caldup{t}\delta}
\ar[d]&%r1c5
\caldup{t}X
\ar@{=}[d]
\\%r1c6
\caldup{t}P^{t+q}X
\ar[r]
&%r2c3
\cdots 
\ar[r]&%r2c4
\caldup{t}P^2X
\ar[r]
&%r2c5
\caldup{t}P^1X%r2c6
}\]
By the twisting lemma, \ref{DsDt=Dt+s}, the composite along the top row is homotopic to the map of interest, and factors through $(\caldup{t}P^{t+q})(A)$, which is $q$-connected by lemma \ref{connectivityOfDerivedPowers}.
\end{proof}

\appendix
\section{Iterated simplicial bar constructions}\label{sec:ItSimpBar}
\newcommand{\algCat}{\calC}
\newcommand{\trip}[3]{{#1}_{\smash{#2}}^{\smash{#3}}}
In this section we will be concerned with iterated simplicial bar constructions. The result here applies in general in the category of algebras over a monad, but nonetheless we only state it for commutative algebras. Establishing notation, for any simplicial object $X$ in $\algCat$, we'll write 
\[\trip{d}{i,q}{X}:X_q\to X_{q-1}\textup{\ and\ }\trip{s}{i,q}{X}:X_q\to X_{q+1}\]
for the $i^\textup{th}$ face and degeneracy maps out of $X_q$. Suppose $F,G\in\algCat^\algCat$ are endofunctors, $\Phi:F\to G$ is a natural transformation, and $A,B\in\algCat$ are objects. Write $[\Phi]:\algCat(A,B)\to\algCat(FA,GB)$ for the operator sending $m:A\to B$ to the diagonal composite in the commuting square
\[\xymatrix@R=4mm{
FA
\ar[r]^-{\Phi_A}
\ar[d]_-{Fm}
\ar[dr]|-{[\Phi]m}
&%r1c1
GA
\ar[d]^-{Gm}
\\%r1c2
FB
\ar[r]_-{\Phi_B}
&%r2c1
GB
%r2c2
}\]
Write $T:\algCat\to \algCat$ for the comonad of the adjunction $\textup{free}:\algCat\rightleftarrows\mathsf{Vect}:\textup{forget}$. Then there is an (augmented) simplicial endofunctor, $\frakb\in s\algCat^\algCat$, derived from the unit and counit of the adjunction:
\[\vcenter{
\def\labelstyle{\scriptstyle}
\xymatrix@C=1.65cm@1{
{\ I\,}
&
\,T^1\,
\ar@{..>}[l]|(.65){\frakd_{0,0}}
\ar[r]|(.65){\fraks_{0,0}}
&
\,T^2\,
\ar@<-1ex>[l]|(.65){\frakd_{0,1}}
\ar@<+1ex>[l]|(.65){\frakd_{1,1}}
\ar@<+1ex>[r]|(.65){\fraks_{0,1}}
\ar@<-1ex>[r]|(.65){\fraks_{1,1}}
&
\,T^3\,
\ar[l]|(.65){\frakd_{1,2}}
\ar@<-2ex>[l]|(.65){\frakd_{0,2}}
\ar@<+2ex>[l]|(.65){\frakd_{2,2}}
\ar[r]|(.65){\fraks_{1,2}}
\ar@<+2ex>[r]|(.65){\fraks_{0,2}}
\ar@<-2ex>[r]|(.65){\fraks_{2,2}}
&
\,T^4\,\makebox[0cm][l]{\,$\cdots $}
\ar@<-3ex>[l]|(.65){\frakd_{0,3}}
\ar@<-1ex>[l]|(.65){\frakd_{1,3}}
\ar@<+1ex>[l]|(.65){\frakd_{2,3}}
\ar@<+3ex>[l]|(.65){\frakd_{3,3}}
}}\]
The \emph{simplicial bar construction} is the cofibrant replacement functor $(b,\epsilon)$ on $s\algCat$ which is the diagonal of the bisimplicial object obtained by application of $\frakb$ levelwise. That is, for $X\in s\algCat$, $bX$ is the simplicial object with $(bX)_q:=T^{q+1}X_q$, and with
\[\trip{d}{i,q}{bX}:=[\frakd_{i,q}]\trip{d}{i,q}{X}.\]
The augmentation $\epsilon:b\to I$ is defined on level $q$ by 
\[\epsilon_q=\frakd_{0,0}\frakd_{0,1}\cdots \frakd_{0,q}:T^{q+1}\to I\]
We can now construct the simplicial homotopy needed for the twisting lemma, \ref{DsDt=Dt+s}.
\begin{prop}\label{IteratedBarConstructionHomotopy}
The natural transformations $\epsilon_b$ and $b\epsilon$ from $b^2:s\calA\to s\calA$ to $b:s\calA\to s\calA$ are naturally simplicially homotopic.
\end{prop}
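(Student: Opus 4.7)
The plan is to exhibit, for each simplicial algebra $X$, an explicit simplicial homotopy between $\epsilon_{bX}$ and $b\epsilon_X$, natural in $X$. In dimension $q$ both maps have the form $(b^2 X)_q = T^{2q+2}X_q \to T^{q+1}X_q = (bX)_q$, and they differ only in which block of $q+1$ consecutive copies of $T$ gets iteratively collapsed: $\epsilon_{bX}$ collapses the outer block (positions $0, 1, \ldots, q$), while $b\epsilon_X$ collapses the inner block (positions $q+1, \ldots, 2q+1$). The homotopy will interpolate between these by sliding the collapsed block one position at a time across $T^{2q+2}$, compensating for the resulting shift of simplicial degree by applying an $X$-degeneracy.

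Concretely, for each $0 \leq i \leq q$ I would define
\[
h_i \;:=\; T^{q+2}(s_i^X)\,\circ\,(\Phi_i)_{X_q} \;:\; (b^2X)_q \to (bX)_{q+1},
\]
where $\Phi_i: T^{2q+2}\to T^{q+2}$ is the natural transformation that collapses the $q$ copies of $T$ at positions $i+1, i+2, \ldots, i+q$, built as an iterated composite of face maps $\frakd_{a,b}$ of $\frakb$, and $s_i^X$ is the $i$-th simplicial degeneracy of $X$. Naturality in $X$ is manifest. The endpoint identities $d_0 h_0 = \epsilon_{bX}$ and $d_{q+1}h_q = b\epsilon_X$ follow by decomposing the face map $d_j$ of $bX$ as $[\frakd_{j,q+1}]\,d_j^X$ and invoking the simplicial identity $d_j s_j = \textup{id}$ in $X$; the remaining bar composite collapses exactly the required block.

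The remaining simplicial homotopy identities ($d_j h_{j-1} = d_j h_j$ for $1\le j\le q$; $d_i h_j = h_{j-1}d_i$ for $i<j$; $d_i h_j = h_j d_{i-1}$ for $i>j+1$; and the analogous degeneracy identities) follow by the same template: decompose the face or degeneracy of $bX$ into its bar and $X$ components, commute the $X$-component through $\Phi_j$ via naturality of $\frakd_{a,b}$ and $\fraks_{a,b}$, and simplify using the standard simplicial identities in $X$. In each case, both sides reduce to a composite applying the same simplicial operation to $X_q$ and collapsing the same $q+1$ consecutive positions of $T^{2q+2}$. The main obstacle is not conceptual but organizational: the $h_i$ interleave bar-structure operations on the $T$ factors with simplicial operations on $X$, and each verification requires carefully separating the two. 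Once this separation is carried out, the manifestly interpolating character of the $\Phi_i$ reduces every identity to the question of which subset of $\{0,\ldots,2q+1\}$ gets collapsed, which follows at once from the simplicial identities in $\frakb$.
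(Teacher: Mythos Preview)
Your proposal is correct and is essentially the paper's proof. The paper's explicit formula is $h_{jq}=[\frakd_{j+1,q+2}\circ\cdots\circ\frakd_{j+1,2q+1}]\,s_{jq}^{X}$, which is precisely your $T^{q+2}(s_j^X)\circ(\Phi_j)_{X_q}$ with $\Phi_j$ the iterated $\frakb$-face collapsing positions $j+1,\ldots,j+q$; the paper then separates each identity into an $X$-part (a simplicial identity in $X$) and a $\frakb$-part (a simplicial identity among the $\frakd$'s and $\fraks$'s), exactly as you outline.
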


\begin{proof}
Write $K=b^{2}X$ and $L=bX$ for the source and target of these maps respectively. Noting the formulae
\[[\frakd_{iq}]^2= [\frakd_{q+i,2q}\circ\frakd_{i,2q+1}]\ \textup{and}\  [\fraks_{iq}]^2= [\fraks_{q+i+2,2q+2}\circ\fraks_{i,2q+1}],\]
we can describe the simplicial structure maps in $K$ and $L$ as follows:
\begin{alignat*}{2}
\trip{d}{iq}{L}&=[\frakd_{iq}]\trip{d}{iq}{X}\\
\trip{s}{iq}{L}&=[\fraks_{iq}]\trip{s}{iq}{X}\\
\trip{d}{iq}{K}&=[\frakd_{q+i,2q}\circ\frakd_{i,2q+1}]\trip{d}{iq}{X}\\
\trip{s}{iq}{K}&=[\fraks_{q+i+2,2q+2}\circ\fraks_{i,2q+1}]\trip{s}{iq}{X}
\end{alignat*}
We can now state an explicit simplicial homotopy between the two maps of interest. Using precisely the notation of \cite[\S5]{MaySimpObj.pdf}, we define $\trip{h}{jq}{}:K_q\to L_{q+1}$, for $0\leq j\leq q$, by the formula
\[\trip{h}{jq}{}:=[\frakd_{j+1,q+2}\circ\cdots \circ\frakd_{j+1,2q+1}]\trip{s}{jq}{X}.\]
We first check that these maps satisfy the defining identities for the notion of simplicial homotopy, numbered (1)-(5) as in \cite[\S5]{MaySimpObj.pdf}. Each identity can be checked in two parts (a)-(b):
{\renewcommand{\circ}{\relax}
\begin{enumerate}\squishlist
\setlength{\parindent}{.25in}
\item We must check that $\trip{d}{i,q+1}{L}\circ \trip{h}{j,q}{}=\trip{h}{j-1,q-1}{}\circ \trip{d}{i,q}{K}$ whenever $0\leq i<j\leq q$, i.e.:
\begin{enumerate}\squishlist
\setlength{\parindent}{.25in}
\item[({\makebox[.51em][c]{a}})] $\trip{d}{i,q+1}{X}\circ \trip{s}{j,q}{X}=\trip{s}{j-1,q-1}{X}\circ \trip{d}{i,q}{X}$,\textup{ and}%$\trip{\frakd}{i,q+1}{}\circ \trip{\fraks}{j,q}{}=\trip{\fraks}{j-1,q-1}{}\circ \trip{\frakd}{i,q}{}$
\item[({\makebox[.51em][c]{b}})]
$\frakd_{i,q+1}\circ
\frakd_{j+1,q+2}\circ\cdots \circ\frakd_{j+1,2q+1}=
\frakd_{j,q+1}\circ\cdots \circ\frakd_{j,2q-1}\circ
\frakd_{q+i,2q}\circ\frakd_{i,2q+1}$.
\end{enumerate}
\item We must check that $\trip{d}{j+1,q+1}{L}\circ \trip{h}{j,q}{}=\trip{d}{j+1,q+1}{L}\circ \trip{h}{j+1,q}{}$ whenever $0\leq j\leq q-1$, i.e.:
\begin{enumerate}\squishlist
\setlength{\parindent}{.25in}
\item[({\makebox[.51em][c]{a}})] $\trip{d}{j+1,q+1}{X}\circ \trip{s}{j,q}{X}=\trip{d}{j+1,q+1}{X}\circ \trip{s}{j+1,q}{X}$,\textup{ and}%$\trip{\frakd}{j+1,q+1}{}\circ \trip{\fraks}{j,q}{}=\trip{\frakd}{j+1,q+1}{}\circ \trip{\fraks}{j+1,q}{}$
\item[({\makebox[.51em][c]{b}})]
$\frakd_{j+1,q+1}\circ
\frakd_{j+1,q+2}\circ\cdots \circ\frakd_{j+1,2q+1}=
\frakd_{j+1,q+1}\circ
\frakd_{j+2,q+2}\circ\cdots \circ\frakd_{j+2,2q+1}$.
\end{enumerate}
\item We must check that $\trip{d}{i,q+1}{L}\circ \trip{h}{j,q}{}=\trip{h}{j,q-1}{}\circ \trip{d}{i-1,q}{K}$ whenever $0\leq j<i-1\leq q$, i.e.:
\begin{enumerate}\squishlist
\setlength{\parindent}{.25in}
\item[({\makebox[.51em][c]{a}})] $\trip{d}{i,q+1}{X}\circ \trip{s}{j,q}{X}=\trip{s}{j,q-1}{X}\circ \trip{d}{i-1,q}{X}$,\textup{ and}%$\trip{\frakd}{i,q+1}{}\circ \trip{\fraks}{j,q}{}=\trip{\fraks}{j,q-1}{}\circ \trip{\frakd}{i-1,q}{}$
\item[({\makebox[.51em][c]{b}})] 
$\frakd_{i,q+1}\circ
\frakd_{j+1,q+2}\circ\cdots \circ\frakd_{j+1,2q+1}=
\frakd_{j+1,q+1}\circ\cdots \circ\frakd_{j+1,2q-1}\circ
\frakd_{q+i-1,2q}\circ\frakd_{i-1,2q+1}$.
\end{enumerate}
\item We must check that $\trip{s}{i,q+1}{L}\circ \trip{h}{j,q}{}=\trip{h}{j+1,q+1}{}\circ \trip{s}{i,q}{K}$ whenever $0\leq i\leq j\leq q$, i.e.:
\begin{enumerate}\squishlist
\setlength{\parindent}{.25in}
\item[({\makebox[.51em][c]{a}})] $\trip{s}{i,q+1}{X}\circ \trip{s}{j,q}{X}=\trip{s}{j+1,q+1}{X}\circ \trip{s}{i,q}{X}$,\textup{ and}%$\trip{\fraks}{i,q+1}{}\circ \trip{\fraks}{j,q}{}=\trip{\fraks}{j+1,q+1}{}\circ \trip{\fraks}{i,q}{}$
\item[({\makebox[.51em][c]{b}})] 
$\fraks_{i,q+1}\circ
\frakd_{j+1,q+2}\circ\cdots \circ\frakd_{j+1,2q+1}=
\frakd_{j+2,q+3}\circ\cdots \circ\frakd_{j+2,2q+3}\circ
\fraks_{q+i+2,2q+2}\circ\fraks_{i,2q+1}$.
\end{enumerate}
\item We must check that $\trip{s}{i,q+1}{L}\circ \trip{h}{j,q}{}=\trip{h}{j,q+1}{}\circ \trip{s}{i-1,q}{K}$ whenever $0\leq j<i\leq q+1$, i.e.:
\begin{enumerate}\squishlist
\setlength{\parindent}{.25in}
\item[({\makebox[.51em][c]{a}})] $\trip{s}{i,q+1}{X}\circ \trip{s}{j,q}{X}=\trip{s}{j,q+1}{X}\circ \trip{s}{i-1,q}{X}$,\textup{ and}%$\trip{\fraks}{i,q++1}{}\circ \trip{\fraks}{j,q}{}=\trip{\fraks}{j,q+1}{}\circ \trip{\fraks}{i-1,q}{}$
\item[({\makebox[.51em][c]{b}})] 
$\fraks_{i,q+1}\circ
\frakd_{j+1,q+2}\circ\cdots \circ\frakd_{j+1,2q+1}=
\frakd_{j+1,q+3}\circ\cdots \circ\frakd_{j+1,2q+3}\circ
\fraks_{q+i+1,2q+2}\circ\fraks_{i-1,2q+1}$.
\end{enumerate}
\end{enumerate}
\noindent Each of these equations follows from the simplicial identities, proving that the $h_{jq}$ form a homotopy. 
Finally, we check that this homotopy is indeed a homotopy between the two maps of interest:
\begin{alignat*}{2}
\trip{d}{0,q+1}{L}\trip{h}{0,q}{}
&=
[\frakd_{0,q+1}\frakd_{1,q+2}\circ\cdots \circ\frakd_{1,2q+1}](\trip{d}{0,q+1}{X}\trip{s}{0q}{X})%
\\
&=
[\frakd_{0,q+1}\frakd_{0,q+2}\circ\cdots \circ\frakd_{0,2q+1}]\trip{\textup{id}}{X_q}{}%
\end{alignat*}
is the action of  $\epsilon_{(bX)}$ in level $q$, and similarly,
\begin{alignat*}{2}
\trip{d}{q+1,q+1}{L}\trip{h}{q,q}{}
&=
[\frakd_{q+1,q+1}\frakd_{q+1,q+2}\circ\cdots \circ\frakd_{q+1,2q+1}](\trip{d}{q+1,q+1}{X}\trip{s}{qq}{X})
\\
&=
[\frakd_{q+1,q+1}\frakd_{q+1,q+2}\circ\cdots \circ\frakd_{q+1,2q+1}]\trip{\textup{id}}{X_q}{}%
\end{alignat*}
is the action of $b\epsilon_{X}$ in level $q$.
}%end "don't print \circ"
\end{proof}

\printbibliography

\end{document}